\theoremstyle{definition}
\newtheorem{example}{Example}[section]
\theoremstyle{plain}
\newtheorem{corollary}{Corollary}[section]
\newtheorem{lemma}{Lemma}[section]
\newtheorem{proposition}{Proposition}[section]
\newcommand{\allitems}{\mathcal{M}}
\newcommand{\allagents}{\mathcal{N}}
\newcommand{\incomevector}{\mathbf{t}}
\newcommand{\range}[2]{\in\{#1,\dots,#2\}}
\newcommand{\partition}[2]{\textsc{Partition}(#1,#2)}
\newcommand{\union}[2]{\textsc{Union}(#1,#2)}
\newcommand{\maxmin}[3]{\left[\frac{#2}{#3}\right]#1}
\newcommand{\wmms}[3]{\text{WMMS}(#1,#2,#3)}
\newcommand{\bmms}[2]{\text{BMMS}(#1,#2)}
\begin{document}

\title{The Maximin Share Dominance Relation}

\author{Erel Segal-Halevi}

\maketitle

\begin{abstract}
Given a finite set $X$ and an ordering $\succeq$ over its subsets, 
the $l$-out-of-$d$ maximin-share of $X$
is the maximal (by $\succeq$) subset of $X$ 
that can be constructed by partitioning $X$ into $d$ parts and picking the worst union of $l$ parts.
A pair of integers $(l,d)$ dominates
a pair $(l',d')$ if, for any set $X$ and ordering $\succeq$, 
the $l$-out-of-$d$ maximin-share of $X$
is at least as good (by $\succeq$)
as the $l'$-out-of-$d'$ maximin-share of $X$.
This note presents a necessary and sufficient condition for deciding whether 
a given pair of integers dominates another pair,
and an algorithm for finding all non-dominated pairs.
It compares the $l$-out-of-$d$ maximin-share to some other criteria for fair allocation of indivisible objects among people with different entitlements.
\end{abstract}

\section{Introduction}
The concept of \emph{maximin share} 
was invented by \citet{budish2011combinatorial}
in his study of fair allocation of indivisible items.

Suppose first that $m$ identical items have to be allocated fairly among $n$ people. Ideally, each person should receive $m/n$ items, but this may be impossible if $m$ is not a multiple by $n$, since the items are indivisible. 
A natural second-best fairness criterion is to round $m/n$ down to the nearest integer, and give each person at least $\lfloor m/n \rfloor$ items. Receiving less than $\lfloor m/n \rfloor$ items is ``too unfair'' --- it is an unfairness that is not justified by the indivisibility of the items.

Suppose now that the items are different, and each item has a different value. Now, rounding down to the nearest integer may not be the right solution.
For example, suppose $n=3$ and $m=5$ and the items' values are $1, 3, 5, 6, 9$. The sum of values is $24$, and it is divisible by $3$, so ideally we would like to give each person a value of at least $8$, but this is not possible. The largest value that can be guaranteed to all three agents is $7$, by the partition $\{1,6\},\{3,5\},\{9\}$.
So here, $7$ is the total value divided by $3$ ``rounded down to the nearest item''.
The set $\{1,6\}$ attaining this maximin value 
is called the ``1-out-of-$3$ maximin-share (MMS)'' ---
it is the best subset of items that can be constructed by partitioning the original set into $3$ parts and taking the least valuable part
(see Section \ref{sec:model} for formal definitions).

By definition, when all agents agree on the values of the different items, it is always possible to give each agent at least his/her 1-out-of-$n$ MMS (this is not necessarily true when agents disagree; see Section \ref{sec:related} for details).

However, the 1-out-of-$n$ maximin-share makes sense only when all agents have the same rights.
In many cases, different agents may have different rights. 
For example, suppose Alice and Bob have a partnership where Alice has 40\% and Bob has 60\% of the shares. 
If the partnership is dissolved and they want to divide the property among them, Alice would naturally like to get at least her \emph{2-out-of-5 MMS} --- the best subset that can be constructed by partitioning the original set into $5$ parts and taking the $2$ least valuable ones. 
In the above example, it is easy to see that the 2-out-of-5 MMS is $\{1,3\}$.

But Alice can rightfully complain that giving her $\{1,3\}$ is not fair: her entitlement is 40\%, which is larger than $1/3$. Therefore she should get at least the 1-out-of-3 MMS, which --- as shown above --- is $\{1,6\}$.
This example illustrates an interesting property of the MMS:
it is possible that $l/d < l'/d'$, but still the $l$-out-of-$d$ MMS is better than the $l'$-out-of-$d'$ MMS!
Similarly, Bob should get at least his 3-out-of-5 MMS, which is $\{1,3,5\}$, but also at least his 1-out-of-2 MMS, which is $\{9,3\}$.
Again, $\{9,3\}$ is better although $1/2 < 3/5$.

In the above example, it is possible to satisfy both fairness requirements of both agents, for example by giving $\{1,6\}$ to Alice and $\{3,5,9\}$ to Bob.
But are these really the only ``relevant'' fairness conditions? 
There are infinitely many fractions that are smaller than $2/5$; 
how many of them should we verify, in order to ensure that the allocation is fair for Alice? 
There are infinitely many fractions that are smaller than $3/5$; 
how many of them should we verify in order to ensure that the allocation is fair for Bob?

To answer this question, we define a dominance relation, which is a partial order on the pairs of positive integers.
We say that a pair $(l,d)$ \emph{dominates}
a pair $(l',d')$, if the $l$-out-of-$d$ MMS 
is better than the $l'$-out-of-$d'$ MMS 
for any set of items with any values.
If $(l,d)$ dominates
$(l',d')$, 
and both $l/d$ and $l'/d'$ are smaller than $0.4$,
then Alice does not need to check the $l'$-out-of-$d'$ MMS in order to verify that her allocation is fair;
it is sufficient that she checks the $l$-out-of-$d$ MMS.

The dominance relation is only a partial order. The above example shows that $(2,5)$ does not dominate $(1,3)$;
it is easy to construct an example showing that $(1,3)$ does not dominate $(2,5)$ either (for example, with $5$ identical items, the 2-out-of-5 MMS is better than the 1-out-of-3 MMS).
The present note shows a necessary and sufficient condition for deciding whether a given pair dominates another given pair. The condition is constructive and implies an efficient algorithm for enumerating all MMS fairness conditions that are not dominated.

\section{Notation}
\label{sec:model}
Let $X$ be a finite set. 
Let $\succeq$ be a weak ordering on the subsets of $X$. We assume that $\succeq$ is (weakly) monotonically increasing, i.e.:
$X_1\supseteq X_2$ implies $X_1 \succeq X_2$.

As a special case, $X$ may be a set of non-negative numbers,
and $\succeq$ may order the subsets of $X$ 
according to their sum, so for example $\{9\} \succ \{1,5\} \succeq \{6\} \succ \{1,3\}$ etc. We call such an ordering \emph{additive}.
In general, we allow $X$ to be any set and $\succeq$ to be any monotonically-increasing ordering on its subsets --- not necessarily additive.

For every integer $d\geq 1$, $\partition{X}{d}$ denotes the set of all partitions of $X$ into $d$ subsets  (some possibly empty).

For every vector $\mathbf{Y}=(Y_1,\ldots,Y_d)$ of sets,
and every integer $l\range{1}{d}$, 
$\union{\mathbf{Y}}{l}$ denotes the set of all unions of exactly $l$ different sets from $\mathbf{Y}$, $Y_{j1}\cup Y_{j2} \cup \cdots \cup Y_{jl}$.

For every set $X$ and integers $l,d$ with $1\leq l\leq d$, the \emph{$l$-out-of-$d$ maximin-share} of $X$ is denoted $\maxmin{X}{l}{d}$ and defined as:
\begin{align*}
\maxmin{X}{l}{d}
~~
:= 
~~
\max_{\mathbf{Y}\in \partition{X}{d}}
~~
\min_{Z\in \union{\mathbf{Y}}{l}}
~~
Z
\end{align*}
where $\max,\min$ are based on the ordering $\succeq$.

If $\succeq$ is not a strict order, then the minimum and maximum may be multi-valued, in which case there may be several different sets that qualify as $\maxmin{X}{l}{d}$. These sets are all equivalent by $\succeq$, so there should be no ambiguity in expressions such as
$\maxmin{X}{l}{d} \succeq Z$ or $Z\succeq \maxmin{X}{l}{d}$.

Let $(l,d)$ and $(l',d')$ be two pairs of integers with $1\leq l\leq d$ and $1\leq l'\leq d'$.
We say that 
\emph{$(l,d)$ dominates $(l',d')$}
if, for any finite set $X$ with a subset-ordering $\succeq$:
\begin{align*}
\maxmin{X}{l}{d} \succeq \maxmin{X}{l'}{d'}.
\end{align*}
It is easy to see that, whenever $l\geq l'$ and $d\leq d'$, the pair $(l,d)$ dominates $(l',d')$. 
So for example, $(5,6)$ dominates $(5,7)$ which dominates $(4,7)$.
Deciding whether a domination holds becomes more challenging when $l$ and $d$ change in the same direction. 

\section{Characterization of dominance}
Recall that, by the integer division theorem, for every two positive integers $d', d$, there exists a unique pair of integers, $q\geq 1$ and $r\in\{0,\ldots,d-1\}$, such that $d' = q d - r$.

\begin{lemma}
\label{lem:mms-char}
Let $l,d,l',d'$ be integers such that $1\leq l\leq d$ and $1\leq l'\leq d'$. 

Let $q\geq 1$ and $r\in\{0,\ldots,d-1\}$ be the unique integers for which $d' = q d - r$.

Then $(l,d)$ dominates $(l',d')$ if and only if $q l - \min(l,r) \geq l'$.


\end{lemma}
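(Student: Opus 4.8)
The plan is to prove the two directions separately, and in each direction to reduce the general (arbitrary monotone ordering) statement to the concrete additive case with carefully chosen item values, exploiting the fact that additive orderings are the ``hardest'' instances.

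\textbf{Sufficiency (if $ql - \min(l,r)\geq l'$, then $(l,d)$ dominates $(l',d')$).} The key observation is that if $(l,d)$ dominates $(l',d')$ and $l'\geq l''$ with $d'\leq d''$, then $(l,d)$ dominates $(l'',d'')$ (by the easy remark at the end of Section~2, composing dominances). So it suffices to show $(l,d)$ dominates the ``extremal'' pair $(ql-\min(l,r),\, d')$ with $d'=qd-r$, and then weaken. Now I want to take an arbitrary $X$ and $\succeq$, fix an optimal $d$-partition $\mathbf{Y}=(Y_1,\dots,Y_d)$ witnessing $\maxmin{X}{l}{d}$, and build from it a $d'$-partition of $X$ witnessing that every union of $l'$ of its parts is $\succeq \maxmin{X}{l}{d}$. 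Concretely, I would refine $\mathbf{Y}$ into $d'=qd-r$ parts by splitting each original part $Y_j$ into $q$ or $q-1$ pieces: since $qd - d' = r \leq d-1 < d$, I can choose exactly $r$ of the parts to split into $q-1$ pieces and the remaining $d-r$ parts to split into $q$ pieces (with empty pieces allowed), giving $(d-r)q + r(q-1) = qd - r = d'$ parts total. The point: any collection of $l'$ of these small pieces, when I take the union of the $\leq l'$ \emph{original} parts they come from, I need at most\ldots here is the counting. A union of $l$ original parts, when split, yields at least $(l-r')q + r'(q-1) = lq - r'$ small pieces where $r'=\min(l,r)$ is how many of the splitted-into-$(q-1)$ parts I can avoid; so by choosing which parts to split finely, $l$ original parts always contain $\geq ql-\min(l,r)\geq l'$ small pieces. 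Contrapositively, any $l'$ small pieces lie within some $l$ original parts, hence their union is $\subseteq$ a union of $l$ original parts, hence (by monotonicity) $\succeq$ the worst such union $= \maxmin{X}{l}{d}$. The main obstacle here is getting the split-assignment right so that the ``$l$ original parts always yield $\geq ql-\min(l,r)$ pieces'' bound is tight and uniform; this needs a clean pigeonhole argument about which $r$ parts are split coarsely.

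\textbf{Necessity (if $(l,d)$ dominates $(l',d')$, then $ql-\min(l,r)\geq l'$).} Here I would construct a single additive instance on which the $l$-out-of-$d$ MMS is forced to be small, so that domination pins down the inequality. The natural candidate is $d'$ items: either all equal, or almost-equal with a few slightly larger/smaller. For an instance with $d'$ identical items of value $1$, $\maxmin{X}{l'}{d'}=l'$, while $\maxmin{X}{l}{d}$ equals $l$ times $\lfloor d'/d\rfloor$ or so — more precisely, the best $d$-partition of $d'$ identical items makes the parts as equal as possible, so the smallest-$l$-parts union has value equal to $\min$ over choices, giving exactly $ql - \min(l,r)$ when $d'=qd-r$ (the $r$ ``short'' parts have one fewer item, and you're forced to include the cheap ones). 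Working this out: partition $d'=qd-r$ items into $d$ parts of sizes as equal as possible, i.e.\ $r$ parts of size $q-1$ and $d-r$ parts of size $q$; the cheapest $l$ parts have total size $(q-1)\min(l,r) + q\max(0,l-r)= ql - \min(l,r)$. Domination then forces $ql-\min(l,r) = \maxmin{X}{l}{d}\geq \maxmin{X}{l'}{d'} = l'$, which is exactly what we want. I should double-check that this equal-as-possible partition is genuinely optimal for the maximin objective on identical items (a short exchange argument: moving an item from a larger part to a smaller part can only help the worst union), and that $\min(l,r)$ is correctly the number of short parts you cannot avoid when $l \le d$.

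I expect the sufficiency direction to be the harder one, since it must handle \emph{arbitrary} monotone orderings and requires the combinatorial refinement-of-partitions construction with the exact piece-counting bound; the necessity direction is essentially one well-chosen additive example plus a routine optimality check. A minor subtlety worth stating explicitly at the outset is the reduction ``dominating one pair $(l_0,d')$ with $l_0$ maximal implies dominating all $(l',d')$ with $l'\le l_0$,'' which lets me phrase sufficiency as a single clean construction rather than a family.
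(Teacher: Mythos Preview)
Your necessity direction is fine and matches the paper's argument essentially verbatim: take $d'$ identical items, note that the balanced $d$-partition has $r$ parts of size $q-1$ and $d-r$ of size $q$, and compute the cheapest $l$-union as $ql-\min(l,r)$.

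Your sufficiency direction, however, is built in the wrong direction and cannot work as written. You want to prove $\maxmin{X}{l}{d}\succeq\maxmin{X}{l'}{d'}$, i.e.\ to \emph{lower}-bound the $l$-out-of-$d$ share. Starting from an optimal $d$-partition and \emph{refining} it to a $d'$-partition only gives a lower bound on $\maxmin{X}{l'}{d'}$ (since that is a maximum over $d'$-partitions), which is useless here. Moreover, the ``contrapositive'' step is not a contrapositive: from ``every $l$ original parts contain at least $l'$ small pieces'' it does \emph{not} follow that ``every $l'$ small pieces lie inside some $l$ original parts'' (e.g.\ $d=3$, $q=3$, $r=0$, $l=1$, $l'=3$: pick one piece from each original part). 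And even if that containment held, $A\subseteq B$ gives $A\preceq B$ by monotonicity, not $A\succeq B$, so your final inequality is reversed as well.

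The paper's proof runs the construction the other way: start from an optimal $d'$-partition $\mathbf{Y}'$ witnessing $W'=\maxmin{X}{l'}{d'}$, and \emph{coarsen} it into a $d$-partition $\mathbf{Y}$ by grouping the $d'$ bundles so that $l$ ``small'' parts together absorb exactly $Y'_1,\ldots,Y'_{l'}$ while the remaining $d-l$ ``large'' parts absorb the rest, arranging the group sizes so that each large part has at least as many $\mathbf{Y}'$-bundles as each small part. Then any $l$-union in $\mathbf{Y}$ is a union of at least $l'$ bundles of $\mathbf{Y}'$, hence $\succeq W'$, which lower-bounds $\maxmin{X}{l}{d}$ as required. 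Your counting intuition (balancing $q$ versus $q-1$ pieces and the role of $\min(l,r)$) is exactly what is needed, but it must be applied to the coarsening, not to a refinement.
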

The following running example is used to illustrate the proof. Let $l=2,~d=3,~d'=7$, so $d' = 3\cdot d - 2$ and $q=3,~r=2$.

Taking $l' = 3\cdot l - 2 = 4$ yields that 
$(2,3)$ dominates $(4,7)$.

In contrast, taking $l'=5$,
yields that 
$(2,3)$ does not dominate $(5,7)$.

\begin{proof}[Proof of Lemma \ref{lem:mms-char}]

~\\
\noindent
{\textbf{If part.}}

We assume that $q l - \min(l,r) \geq l'$.
We should prove that,
for every finite set $X$ with a subset-ordering, $\maxmin{X}{l}{d} \succeq \maxmin{X}{l'}{d'}$.

Let $W' := \maxmin{X}{l'}{d'}$.
By definition of the maximin share, 
there exists a partition $\mathbf{Y'}\in \partition{X}{d'}$,
such that $W' = \min_{\succ} \union{\mathbf{Y'}}{l'}$.
W.l.o.g. denote the bundles in that union by $Y'_1,\ldots,Y'_{l'}$
(in the running example, $W'$ is a union of $4$ bundles from a partition of $X$ into 7 bundles).

We now construct a new partition $\mathbf{Y}\in \partition{X}{d}$,
in which each part is a union of zero or more bundles from  $\mathbf{Y'}$.
In the new partition there will be $l$ ``small parts'' and $d-l$ ``large parts'':
the small parts will be unions of a small number of bundles from  $Y'_1,\ldots,Y'_{l'}$,
and the large parts will be unions of a larger number of bundles from  $Y'_{l'+1},\ldots,Y'_{d'}$.
The construction is slightly different depending on whether or not $l\geq r$.

If $l\geq r$, then $\mathbf{Y}$ is constructed as follows.

\begin{itemize}
\item
Each of the $l$ small parts is union of \emph{at most} $q$ bundles from $Y'_1,\ldots,Y'_{l'}$
(in the running example, the small parts may be $Y_1 := Y'_1\cup Y'_2$ and $Y_2 := Y'_3\cup Y'_4$).
This is possible since by assumption $ql \geq l' + \min(l,r) \geq l'$.
Note that some of these $l$ parts may be empty.
\item 
Each of the $d-l$ large parts is a union of \emph{at least} $q$ bundles from  $Y'_{l'+1},\ldots,Y'_{d'}$
(in the running example, the large part may be $Y_3 := Y'_5\cup Y'_6\cup Y'_7$).
This is possible since by assumption $q(d-l) = qd - ql = d' + r - (ql) \leq d' + r - (l'+\min(r,l)) = d' + r - (l'+r) = d'-l'$.
\end{itemize}
If $l< r$, then $\mathbf{Y}$ is constructed as follows.
\begin{itemize}
\item 
Each of the $l$ small parts is union of \emph{at most} $q-1$ bundles from $Y'_1,\ldots,Y'_{l'}$.
This is possible since by assumption $(q-1)l = ql - l = ql - \min(l,r) \geq l'$.
\item 
Each of the $d-l$ large parts is a union of \emph{at least} $q-1$ bundles from  $Y'_{l'+1},\ldots,Y'_{d'}$.
This is possible since by assumption $(q-1)(d-l) = (qd-d) - (ql-l) = (d'+r-d) - (ql-\min(r,l)) < (d') - (l')$.
\end{itemize}
The important property of this construction, in both cases, is that each small part contains at most as many bundles from $\mathbf{Y'}$ as any large part.

Let $W$ be any bundle in $\union{\mathbf{Y}}{l}$.
Then $W$ is a union of at least $l'$ bundles from $\mathbf{Y'}$.
Indeed, if $W$ is a union of the $l$ small parts,
then by definition it is the union of the $l'$ bundles $Y'_1,\ldots,Y'_{l'}$ from $\mathbf{Y'}$.
Otherwise, $W$ is a union that contains some large parts. 
Since each large part contains at least as many bundles from  $\mathbf{Y'}$
as each small part,
$W$ is a union of \emph{at least} $l'$ bundles from $\mathbf{Y'}$
(in the running example, any 
union of $2$ parts from $\mathbf{Y}$, 
is a union of $4$ or $5$ bundles from $\mathbf{Y'}$).

In any case, $W$ contains a union of $l'$ bundles from $\mathbf{Y'}$;
denote this union by $W''$.
Note that $W\supseteq W''$, so by the monotonicity of the subset ordering, $W \succeq W''$.
By definition of the maximin share, $W'$ is the minimum element in 
$\union{\mathbf{Y'}}{l'}$.
But $W''\in\union{\mathbf{Y'}}{l'}$, so $W'' \succeq W'$.
By transitivity, $W\succeq W'$.

This is true for any bundle $W\in \union{\mathbf{Y}}{l}$.
Hence it is also true for the  minimum bundle in $\union{\mathbf{Y}}{l}$, which is by definition $\maxmin{X}{l}{d}$.

~\\
\noindent
{\textbf{Only-if part.}}

We assume that $q l - \min(l,r) < l'$.
We should show 
a finite set $X$ and a subset-ordering for which $\maxmin{X}{l}{d} \prec \maxmin{X}{l'}{d'}$.

Let $X$ be a set with $d'$ elements.
Let $\succeq$ be an ordering 
that considers only the cardinality (larger sets are better).%
\footnote{
If a strict ordering is wanted, 
then one can let $\succeq$ be an additive ordering,
determined by sum of values, 
and assign to each element of $X$ a distinct random value that is very close to $1$.
}
Obviously, $\maxmin{X}{l'}{d'}$ contains $l'$ items and its value is $l'$.

The most balanced partition of $X$ into $d$ parts
contains $r$ parts with $q-1$ items and $d-r$ parts with $q$ items.

If $l\geq r$, then the $l$ low-value parts contain the $r$ parts with $q-1$ items 
and $l-r$ additional parts with $q$ items, so their total value is:
$(q-1)r+q(l-r) = qr-r+ql-qr = ql-r = ql-\min(l,r) < l'$.

If $l < r$, the $l$ low-value parts have $q-1$ items each,
so their total value is:
$(q-1)l = ql-l = ql-\min(l,r) < l'$.

$\maxmin{X}{l}{d}$ is this union of $l$ parts. In both cases it is strictly worse than $\maxmin{X}{l'}{d'}$.
\end{proof}

Some special cases of Lemma \ref{lem:mms-char} are presented below.
\begin{corollary}
\label{lem:mms-cor}
Let $l,d,l',d'$ be integers such that $1\leq l\leq d$ and $1\leq l'\leq d'$. 
Then in each of the following cases, $(l,d)$ dominates $(l',d')$:

(a) When $l > l'$ and $d = d'$ (here $q=1, r=0$).

(b) When $l = l'$ and $d < d'$ (here $q>1$ so $ql-r =ql'-r > l'$).

(c) When $l' = l-r$ and $d' = d-r$ (here $q=1$).

(d) When $l'/d'$ is a non-reduced fraction and $l/d$ is its reduced fraction (here $q$ is the common factor of $d'$ and $l'$, so that $d'=qd$ and $l'=ql$, and $r=0$).
\end{corollary}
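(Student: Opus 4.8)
The corollary collects four special cases of Lemma \ref{lem:mms-char}, so the plan is simply to verify, in each case, that the integer-division data $(q,r)$ defined by $d' = qd - r$ take the claimed values, and then to check that the inequality $ql - \min(l,r) \geq l'$ holds.

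For part (a), with $d = d'$ the equation $d' = qd - r$ forces $q = 1$ and $r = 0$; then $ql - \min(l,r) = l > l' $ by hypothesis, so dominance holds. For part (b), with $l = l'$ and $d < d'$ we have $d' \geq d+1$, and writing $d' = qd - r$ with $r \in \{0,\dots,d-1\}$ forces $q \geq 2$ (since $q=1$ would give $d' = d - r \leq d$). Then $ql - \min(l,r) \geq ql - r \geq 2l - (d-1)$; I need this to be $\geq l' = l$, i.e. $l \geq d - 1$. The cleaner route, and the one I would actually write, avoids estimating $r$: observe $ql - \min(l,r) \geq ql - l = (q-1)l \geq l = l'$ since $q \geq 2$ and $l \geq 1$. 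For part (c), $l' = l - r$ and $d' = d - r$ with $r \in \{0,\dots,d-1\}$: the equation $d' = qd - r$ is satisfied by $q = 1$ (and this $r$), and by uniqueness those are the integer-division data; then $ql - \min(l,r) = l - \min(l,r) \geq l - r = l'$, using $\min(l,r) \leq r$. I should also note $r < d$ guarantees $r \in \{0,\dots,d-1\}$ so the uniqueness clause applies. For part (d), $l'/d'$ non-reduced with reduced form $l/d$ means there is an integer $q \geq 1$ (in fact $q \geq 2$, since the fraction is genuinely non-reduced) with $d' = qd$ and $l' = ql$; then $r = 0$, $\min(l,r) = 0$, and $ql - \min(l,r) = ql = l'$, so dominance holds with equality.

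I would structure the proof as four short paragraphs, one per case, each stating the $(q,r)$ values with a one-line justification via the uniqueness of integer division, then the inequality check. No case presents a real obstacle; the only thing to be careful about is part (b), where one should resist the temptation to bound $r$ and instead use $(q-1)l \geq l'$ directly, and part (d), where one should confirm that ``non-reduced'' gives $q \geq 2$ so that the conclusion $l' = ql$ is consistent with $l' \geq l$. The whole argument is a routine unpacking of the lemma, so the write-up can be kept to a few lines per item.

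\begin{proof}[Proof of Corollary \ref{lem:mms-cor}]
Each case follows from Lemma \ref{lem:mms-char}; in each we identify the unique integers $q \geq 1$ and $r \in \{0,\dots,d-1\}$ with $d' = qd - r$ and check that $ql - \min(l,r) \geq l'$.

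(a) Since $d' = d$, the pair $q = 1$, $r = 0$ satisfies $d' = qd - r$, hence these are the integer-division data. Then $ql - \min(l,r) = l > l'$ by hypothesis.

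(b) Since $d < d' $, writing $d' = qd - r$ with $r \in \{0,\dots,d-1\}$ forces $q \geq 2$: if $q = 1$ then $d' = d - r \leq d$, a contradiction. Hence $ql - \min(l,r) \geq ql - l = (q-1)l \geq l = l'$.

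(c) With $l' = l - r$ and $d' = d - r$ for some $r \in \{0,\dots,d-1\}$, the pair $q = 1$ and this $r$ satisfies $d' = qd - r$, so these are the integer-division data. Then $ql - \min(l,r) = l - \min(l,r) \geq l - r = l'$.

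(d) If $l'/d'$ is non-reduced with reduced form $l/d$, there is an integer $q \geq 2$ with $d' = qd$ and $l' = ql$. Thus $r = 0$ and $ql - \min(l,r) = ql = l'$.
\end{proof}
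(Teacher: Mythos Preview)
Your proof is correct and follows exactly the approach indicated by the paper, which embeds its argument in the parenthetical hints within the corollary statement itself rather than giving a separate proof. Your handling of case (b) via $ql - \min(l,r) \geq (q-1)l \geq l$ is in fact cleaner than the paper's hint ``$ql-r > l'$'', which glosses over the distinction between $r$ and $\min(l,r)$.
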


\section{Finding all non-dominated pairs}
Suppose Alice is involved in a partnership, where her entitlement is some fraction $a\in (0,1)$. 
The partnership is dissolved and Alice receives some subset of the previously shared items.
She would like to decide whether she received a fair share according to her entitlement.
At first glance, to verify this she should check the $l$-out-of-$d$ MMS for \emph{all} pairs $(l,d)$ such that $l/d \leq a$. However, there are infinitely many such pairs.
In fact, Alice should check only finitely many such pairs.
This is proved by the following lemma. 
\begin{lemma}
\label{lem:mms-bundlesize}
For every integers $h\geq 0$ and $1\leq l\leq d$, If $|X| \leq  d$ then
$
\maxmin{X}{l}{d}
\succeq
\maxmin{X}{l+h}{d+h}.
$
\end{lemma}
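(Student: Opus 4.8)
The plan is to reuse the part‑merging idea from the ``if'' part of Lemma~\ref{lem:mms-char}, exploiting the fact that when $|X|\le d$ every partition of $X$ into $d+h$ parts is forced to contain at least $h$ empty parts. First I would dispose of the degenerate cases: $h=0$ is reflexivity, and $l=d$ forces $\maxmin{X}{l}{d}=X=\maxmin{X}{l+h}{d+h}$; so assume $h\ge 1$. Then I would fix an optimal partition $\mathbf{Y'}\in\partition{X}{d+h}$ for which $W':=\maxmin{X}{l+h}{d+h}=\min_{Z\in\union{\mathbf{Y'}}{l+h}}Z$, and from it build a partition $\mathbf{Y}\in\partition{X}{d}$ witnessing $\maxmin{X}{l}{d}\succeq W'$ by deleting $h$ empty parts that ``sit inside'' this minimizing union.

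The crucial reduction is to show that the minimizing union $W'$ may be taken to consist of $l+h$ parts, at least $h$ of which are empty. Since $|X|\le d$, the partition $\mathbf{Y'}$ has at least $(d+h)-|X|\ge h$ empty parts. Suppose the currently chosen minimizing union contains fewer than $h$ empty parts. If it is the empty set, all $l+h$ of its parts are empty and it already has at least $h$ empty parts; otherwise it contains at least one non‑empty part, and since fewer than $h$ of its parts are empty while $\mathbf{Y'}$ has at least $h$ empty parts, at least one empty part lies outside it. Replacing that non‑empty interior part by that exterior empty part yields another union of $l+h$ distinct parts, which is a subset of $W'$, hence by monotonicity of $\succeq$ and minimality of $W'$ is again a minimizing union — now with one more empty part. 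Iterating terminates (the empty‑part count strictly increases and is bounded by $l+h$), so after finitely many swaps I may relabel and assume $W'=Y'_1\cup\cdots\cup Y'_{l+h}$ with $Y'_{l+1}=\cdots=Y'_{l+h}=\emptyset$.

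Finally I would set $\mathbf{Y}:=(Y'_1,\dots,Y'_l,\,Y'_{l+h+1},\dots,Y'_{d+h})$; deleting the $h$ empty parts changes neither the union nor the partition property, so $\mathbf{Y}\in\partition{X}{d}$ (exactly $d$ parts). For an arbitrary $W\in\union{\mathbf{Y}}{l}$, the set $W\cup Y'_{l+1}\cup\cdots\cup Y'_{l+h}$ is a union of $l+h$ \emph{distinct} parts of $\mathbf{Y'}$, hence $\succeq W'$; but since $Y'_{l+1},\dots,Y'_{l+h}$ are empty this set equals $W$, so $W\succeq W'$. As this holds for every $W\in\union{\mathbf{Y}}{l}$ it holds for the minimum one, giving $\maxmin{X}{l}{d}\succeq\min_{W\in\union{\mathbf{Y}}{l}}W\succeq W'=\maxmin{X}{l+h}{d+h}$.

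I expect the main obstacle to be the swapping step of the second paragraph: one must argue carefully that, as long as the minimizing union is non‑empty and has fewer than $h$ empty parts, an empty part is simultaneously available outside it and a non‑empty part is available to evict from it, and that the process terminates. Everything else is counting (checking $\mathbf{Y}$ has $d$ parts and that the $l+h$ parts above are distinct) and a direct appeal to monotonicity and to the definition of the maximin share.
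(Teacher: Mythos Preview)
Your proof is correct and follows essentially the paper's argument, but with an unnecessary detour. The swapping step in your second paragraph---which you flag as the main obstacle---is superfluous: your third paragraph never uses that the deleted parts $Y'_{l+1},\dots,Y'_{l+h}$ lie inside the minimizing union, only that they are empty parts of $\mathbf{Y'}$ not appearing in $\mathbf{Y}$. So you can simply pick \emph{any} $h$ empty parts of $\mathbf{Y'}$ (they exist since $|X|\le d$ forces at least $h$ empty parts), delete them to form $\mathbf{Y}\in\partition{X}{d}$, and run your third paragraph verbatim. That is exactly the paper's proof.
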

\begin{proof}
Let $W' := \maxmin{X}{l+h}{d+h}$.
By definition of the maximin share, 
there exists a partition $\mathbf{Y'}\in \partition{X}{d+h}$,
such that $W' = \min_\succ \union{\mathbf{Y'}}{l+h}$.

Since $|X|\leq d$, at most $d$ parts in $\mathbf{Y'}$ are non-empty.
Consider a new partition $\mathbf{Y}\in\partition{X}{d}$,
which contains all non-empty parts in $\mathbf{Y'}$.
Then, each bundle $W\in \union{\mathbf{Y}}{l}$
is also contained in $\union{\mathbf{Y'}}{l+h}$, so
$W\succeq W'$. 
This is true, in particular, when $W$ is the minimum bundle in $\union{\mathbf{Y}}{l}$, which is by definition $\maxmin{X}{l}{d}$.
\qed
\end{proof}
Lemma \ref{lem:mms-bundlesize} implies that, if the total number of allocated items is $m$, then Alice should only check pairs $(l,d)$ such that $d\leq m$ (in addition to $l/d\leq a$).
This is already a finite number of conditions to check.
Adding Lemma \ref{lem:mms-char} allows to reduce the number of conditions even further, using the following algorithm:
\begin{enumerate}
\item Let $D := \{1,\ldots,|X|\}$.
\item For each $d\in D$:
\begin{itemize}
\item Let $l_d := $ the largest integer for which $l_d / d \leq a$.
\end{itemize}
\item For each pair $d,d'\in D$ with $d\neq d'$:
\begin{itemize}
\item Using integer division, find $q\geq 1,r\range{0}{d-1}$ such that $d' = qd-r$.
\item If $q l_{d} - \min(l_d,r)  \geq l_{d'}$, then remove $d'$ from $D$.
\end{itemize}
\item For each remaining $d\in D$, check the subset $\maxmin{X}{l_d}{d}$.
\end{enumerate}
The filtering in step 2 is justified by Corollary \ref{lem:mms-cor}(a).
The filtering in step 3 is justified by the if part of Lemma \ref{lem:mms-char}. If the condition is satisfied, 
$(l_d,d)$ dominates $(l_{d'},d')$, so 
there is no need to check the subset $\maxmin{X}{l_{d'}}{d'}$.

By the only-if part of Lemma \ref{lem:mms-char}, all remaining conditions are independent (not implied by others).

\begin{example}
Let $a = 0.74$ and suppose $X$ contains $7$ items.
In step 2 of the above algorithm, the following seven pairs are generated:
$0/1$, $1/2$, $2/3$, $2/4$, $3/5$, $4/6$, $5/7$.
In step 3, the following pairs are filtered out:
\begin{itemize}
\item $0/1$ is filtered out by $2/3$ (with $q=1,r=2$).
\item $1/2$ is filtered out by $2/3$ (with $q=1,r=1$).
\item $2/4$ is filtered out by $2/3$ (with $q=2,r=2$).
\item $3/5$ is filtered out by $5/7$ (with $q=1,r=2$).
\item $4/6$ is filtered out by $2/3$ (with $q=2,r=0$).
\end{itemize}

In step 4, the following MMS bundles remain:
\begin{align*}
\maxmin{X}{2}{3}
&&
\maxmin{X}{5}{7}
\end{align*}
So, to verify that she got a fair share (by the MMS criterion), Alice should compare her bundle to these two subsets only.

As implied by Lemma \ref{lem:mms-char}, with $l=2,~l'=5,~d=3,~d'=7$, these two comparisons are independent.
For example, suppose Alice's preferences are additive. Then, if she values the items in $X$ at $\{1,1,1,0,0,0,0\}$ then $\maxmin{X}{2}{3}$ is worth 2 and $\maxmin{B}{5}{7}$ is worth 1 so the comparison to $\maxmin{X}{2}{3}$ is stronger, 
while if she values the items in $X$ at $\{1,1,1,1,1,1,1\}$ then
$\maxmin{X}{2}{3}$ is worth 4 and $\maxmin{X}{5}{7}$ is worth 5 so the comparison to $\maxmin{X}{5}{7}$ is stronger.
\end{example}

\section{Ordinal vs. Cardinal MMS}
Suppose we are given a set $\allitems$ of items, an order relation $\succeq$ on $2^{\allitems}$, and a vector of $n$ entitlements $\incomevector =(t_1,\ldots,t_n)$, with $\sum_{i=1}^n t_i = 1$.
Our goal is to find a partition $X_1\sqcup \cdots \sqcup X_n = \allitems$
that is ``fair'' with respect to the vector $\incomevector$.

Define a partition $\mathbf{X}$ as \emph{Ordinal-MMS (OMMS) fair for $i$} if, for every positive integers $l,d$ with $l/d\leq t_i$, $X_i \succeq \maxmin{\allitems}{l}{d}$. In the previous section we saw that, despite the infinite number of pairs $l,d$ satisfying with $l/d\leq t_i$, it is still possible to check in finite time whether a given allocation is OMMS-fair. Note that OMMS-fairness is based only on the order relation $\succeq$ on subsets of  $\allitems$ (hence the name ordinal).

In this section we consider a special case in which $\allitems$ is a multi-set of numbers. For any subset $Z\subseteq \allitems$, denote by $V(Z)$ the sum of elements of $Z$. The relation $\succeq$ orders subsets by their sum, so that $Z_1\succeq Z_2 \iff V(Z_1)\geq V(Z_2)$. 
Arguably, an ideal partition is a partition $\mathbf{X}$ into $n$ bundles in which, for each bundle $i$, $V(X_i) = t_i\cdot V(\allitems)$. In case such a partition does not exist, we would like a partition that is ``as close as possible'' to that ideal partition
. 
This is formalized by the following definition (it is based on the weighted-MMS notion of \citet{farhadi2019fair}):
\begin{align*}
\wmms{\allitems}{\mathbf{t}}{i}
~~
:= 
t_i\cdot
\left(
~~
\max_{\mathbf{Y}\in \partition{\allitems}{n}}
~~
\min_{j\in\allagents}
~~
\frac{V(Y_j)}{t_j}
\right)
\end{align*}
A partition $\mathbf{X}$ is called \emph{WMMS-fair for $i$} if $V(X_i)\geq \wmms{\allitems}{\mathbf{t}}{i}$.
Note that the WMMS crucially depends on the choice of the function $V$: different functions $V$ lead to different WMMS values, even if they induce the same ordering on the subsets of $Z$.

If all entitlements are equal ($t_i=1/n$ for all $i$), then $\wmms{\allitems}{\mathbf{t}}{i}$ is exactly the value of $\maxmin{\allitems}{1}{n}$, so WMMS-fairness and OMMS-fairness coincide. However, when the entitlements are different, these two notions are different --- none of them dominates the other one.

The following proposition shows that
WMMS-fairness may be strictly stronger than OMMS-fairness. 
\begin{proposition}
There are sets $\allitems$ and vectors $\mathbf{t}$ of size $n=2$,
for which, 
for all $i \in [n]$ and all positive integers $l,d$ with $l/d\leq t_i$:
\begin{align*}
\wmms{\allitems}{\mathbf{t}}{i}
>
V\left(\maxmin{\allitems}{l}{d}\right).
\end{align*}
\end{proposition}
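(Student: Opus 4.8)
The plan is to prove the proposition by a direct example rather than by invoking the dominance characterization. Take $n=2$, an entitlement vector $\mathbf{t}=(t_1,t_2)$ with $0<t_1<t_2$ and $t_1+t_2=1$ --- concretely $\mathbf{t}=(1/3,2/3)$ --- and let $\allitems$ be the two-element multiset whose values are exactly the two entitlements, $\allitems=\{t_1,t_2\}$ (rescaling to, say, $\{1,2\}$ changes nothing, since both sides of the desired inequality are homogeneous of degree one in $V$). The guiding intuition is that for this $\allitems$ the split into the singletons $\{t_1\}$ and $\{t_2\}$ is \emph{exactly} proportional to $\mathbf{t}$, which pushes each agent's WMMS up to the ideal value $t_i\cdot V(\allitems)$, whereas having only two items forces every $l$-out-of-$d$ maximin share with $l<d$ to be no larger than the smaller item's value.

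First I would compute the WMMS. For any partition $(Y_1,Y_2)$ of $\allitems$, a minimum is at most any convex combination, so $\min(V(Y_1)/t_1,\,V(Y_2)/t_2)\le t_1\cdot(V(Y_1)/t_1)+t_2\cdot(V(Y_2)/t_2)=V(Y_1)+V(Y_2)=V(\allitems)$, and the split $(\{t_1\},\{t_2\})$ attains this with $\min(1,1)=V(\allitems)$. Hence $\max_{\mathbf{Y}\in\partition{\allitems}{2}}\min_{j\in\{1,2\}}V(Y_j)/t_j=V(\allitems)$, so $\wmms{\allitems}{\mathbf{t}}{i}=t_i\cdot V(\allitems)=t_i$ for both $i$.

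Next I would bound $V\left(\maxmin{\allitems}{l}{d}\right)$ for every admissible pair. Fix $i$ and a pair with $l/d\le t_i$; since $t_i<1$ this forces $l<d$. If $l\le d-2$, then every partition of the two-element set $\allitems$ into $d$ parts has at least $d-2\ge l$ empty parts, so the cheapest union of $l$ parts is empty and $V\left(\maxmin{\allitems}{l}{d}\right)=0$. If $l=d-1$, then in every partition the unique most valuable part contains the item worth $t_2$ and hence has value at least $t_2$, so the union of the other $d-1$ parts has value at most $V(\allitems)-t_2=t_1$; thus $V\left(\maxmin{\allitems}{d-1}{d}\right)\le t_1$. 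It remains to close the argument agent by agent: for agent $1$, $l/d\le t_1<1/2$ gives $d>2l$ and hence $d\ge l+2$, so only the first case occurs and every admissible MMS has value $0<t_1=\wmms{\allitems}{\mathbf{t}}{1}$; for agent $2$, every admissible MMS has value $0$ or at most $t_1$, and either way this is strictly less than $t_2=\wmms{\allitems}{\mathbf{t}}{2}$ because $t_1<t_2$.

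The only delicate point --- the ``main obstacle'', such as it is --- is covering the infinitely many pairs $(l,d)$ with $l/d\le t_i$ in one stroke; restricting to two items makes this trivial, as the single non-degenerate case is $l=d-1$, which is moreover vacuous for agent $1$, so no appeal to Lemma~\ref{lem:mms-char} or to the enumeration algorithm is needed. The substantive design decision is the choice of item values: taking them in the ratio $t_1:t_2$ is what makes the proportional split available (maximising the WMMS) while keeping the set ``lumpy'' enough that no $l$-out-of-$d$ MMS with $l<d$ can exceed the smaller value. One should also note in passing that the inequality is non-vacuous --- e.g.\ $(1,3)$ is admissible for agent $1$ and $(1,2)$ for agent $2$ --- although the statement does not require it.
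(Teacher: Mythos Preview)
Your proof is correct and follows essentially the same construction as the paper's: a two-item set with values in the ratio $t_1:t_2$, so that the proportional split makes $\wmms{\allitems}{\mathbf{t}}{i}=t_i\,V(\allitems)$, while every $l$-out-of-$d$ MMS with $l<d$ is worth at most the smaller item. The one (minor) difference is that the paper uses $(0.4,0.6)$ and appeals to Lemmas~\ref{lem:mms-char} and~\ref{lem:mms-bundlesize} to reduce the check to $d\le |\allitems|=2$, whereas you handle all pairs $(l,d)$ directly via the case split $l\le d-2$ versus $l=d-1$, which makes your argument self-contained.
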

\begin{proof}
Let $n=2$ and $\mathbf{t}=(0.4,0.6)$ and $\allitems = \{40,60\}$.
Then, 
$\wmms{\allitems}{\mathbf{t}}{1}=40$ and 
$\wmms{\allitems}{\mathbf{t}}{2}=60$
by the partition $\mathbf{Y}= (\{40\},\{60\})$, and this is the unique WMMS-fair partition.

Since $\allitems$ contains two items, Lemmas \ref{lem:mms-char} and \ref{lem:mms-bundlesize} imply that only bundles with $d\in\{1,2\}$ should be checked for the right-hand side.
Since $t_1 < 1/2$, the most valuable MMS bundle for $l/d \leq t_1$ is $\emptyset$.
Since $t_2 < 1$, the most valuable MMS bundle for $l/d \leq t_2$ is $\maxmin{\allitems}{1}{2} = \{40\}$. So:
\begin{align*}
\wmms{\allitems}{\mathbf{t}}{1} = 
40 > 0 = V(\emptyset)
\\
\wmms{\allitems}{\mathbf{t}}{2} - 60 > 40 = V(\maxmin{\allitems}{1}{2})
\end{align*}
Indeed, every partition in which $|X_2|\geq 1$ is OMMS-fair.
\end{proof}

The following proposition shows that WMMS-fairness may be strictly \emph{weaker} than OMMS-fairness.

\begin{proposition}
There are sets $\allitems$ and vectors $\mathbf{t}$ of size $n=3$,
for which, 
for some $i \in [n]$ and positive integers $l,d$ with $l/d\leq t_i$:
\begin{align*}
V\left(\maxmin{\allitems}{l}{d}\right)
>
\wmms{\allitems}{\mathbf{t}}{i}.
\end{align*}
\end{proposition}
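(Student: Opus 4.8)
The plan is to construct an explicit example with $n=3$ agents, unequal entitlements, and a concrete multi-set of numbers, in which one agent's WMMS value is dragged down by the "bottleneck" created by another agent's small entitlement, while that same agent's OMMS value (for some admissible pair $l/d \le t_i$) stays high. The key mechanism to exploit is that the WMMS is a single global quantity: it takes one partition of $\allitems$ into $n$ parts and evaluates $\min_j V(Y_j)/t_j$, so an agent with a tiny entitlement $t_j$ forces $V(Y_j)$ to be correspondingly tiny, which can prevent the remaining items from being spread favourably for the agent we care about. By contrast, the OMMS bundle $\maxmin{\allitems}{l}{d}$ only ever partitions into $d$ parts chosen to suit agent $i$, and is completely insensitive to the other entitlements.

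Concretely, I would try something like $\mathbf{t} = (t_1, t_2, t_3)$ with $t_1$ very small (say $t_1 = \varepsilon$), $t_2$ and $t_3$ moderate, and $\allitems$ a handful of equal or nearly-equal items, e.g. a few copies of $1$. First I would compute $\wmms{\allitems}{\mathbf{t}}{i}$ for the target agent $i$ (say $i=3$): this requires identifying the maximizing partition $\mathbf{Y}$ of $\allitems$ into $3$ parts, which is forced to give agent $1$ an almost-empty bundle (because $t_1$ is tiny, $V(Y_1)/t_1$ blows up unless $V(Y_1)$ is near zero, but with indivisible unit items the smallest nonzero bundle still has value $1$, inflating the ratio for the others — or conversely giving agent $1$ the empty set but then only two bundles absorb all the mass). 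I would pick the numbers so that the resulting $\min_j V(Y_j)/t_j$, multiplied by $t_3$, comes out to some modest value. Second, I would compute $V(\maxmin{\allitems}{l}{d})$ for a well-chosen $(l,d)$ with $l/d \le t_3$ — for instance $(1,2)$ if $t_3 \ge 1/2$, or $(2,3)$, etc. — and observe that partitioning $\allitems$ into just $d$ roughly-equal parts and taking the worst $l$ of them yields strictly more than the WMMS number. Then I would invoke Lemmas~\ref{lem:mms-bundlesize} and~\ref{lem:mms-char} to confirm that this $(l,d)$ is indeed the best (or a good enough) admissible pair, and that no smaller-denominator pair does better, so the comparison is legitimate.

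The main obstacle I expect is the interaction between indivisibility and the entitlement-scaling in the WMMS definition: I need the numbers to be chosen so that the optimal $n$-part partition is genuinely constrained (the ideal proportional partition is infeasible) in a way that hurts agent $i$, yet the $d$-part OMMS partition is not similarly constrained. With unit items this is delicate — if all items are identical and $|\allitems|$ is small, the WMMS optimum may just assign items greedily in proportion to $t$, and may not be much worse than any OMMS bundle. So I would likely need either (i) non-identical item values (e.g. one large item plus several small ones, tuned so that agent $1$'s forced bundle is "wasteful"), or (ii) a cardinality count argument as in the earlier proofs, where $\succeq$ orders by size. Once the right constants are found, verifying the WMMS value is a finite check over the (few) partitions into $3$ parts, and verifying the OMMS value is a finite check over partitions into $d \le |\allitems|$ parts, so the remainder is routine arithmetic.

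To make this concrete I would write: let $n=3$, $\mathbf{t} = (1/3 - \delta,\ 1/3,\ 1/3 + \delta)$ for a suitable small $\delta > 0$ (or a more asymmetric choice if needed), and $\allitems$ a multi-set such as $\{1,1,1,1\}$ or $\{2,1,1,1,1\}$; then exhibit the partition achieving the WMMS for agent $3$, show its value is some number $v_{\text{WMMS}}$, exhibit the $d$-part partition achieving $\maxmin{\allitems}{l}{d}$ for the chosen admissible $(l,d)$, show its value is $v_{\text{OMMS}} > v_{\text{WMMS}}$, and finally cite Lemma~\ref{lem:mms-bundlesize} to bound $d \le |\allitems|$ and Lemma~\ref{lem:mms-char} (or Corollary~\ref{lem:mms-cor}) to rule out that any other admissible pair beats $(l,d)$, completing the argument.
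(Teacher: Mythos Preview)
Your high-level intuition --- that the WMMS of agent $i$ can be dragged down by the other agents' entitlements while the OMMS cannot --- is correct, but the concrete plan you sketch will not produce the strict inequality, and you are also doing more work than the statement requires.

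First, the statement only asks for \emph{some} $i$ and \emph{some} pair $(l,d)$ with $l/d\le t_i$; you do not need to identify the best admissible pair, so invoking Lemma~\ref{lem:mms-bundlesize} and Lemma~\ref{lem:mms-char} to rule out other pairs is unnecessary.

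Second, and more importantly, your suggested parameters $\mathbf{t}=(1/3-\delta,\,1/3,\,1/3+\delta)$ cannot work. With $t_3$ just above $1/3$, the only admissible pairs $(l,d)$ with $d\le |\allitems|$ are those with $l/d\le 1/3$, so the relevant OMMS bundle is essentially $\maxmin{\allitems}{1}{3}$. But that is computed from a partition of $\allitems$ into $3=n$ parts, exactly as the WMMS is; with near-equal entitlements the two optimizations coincide. Checking your two examples: for $\allitems=\{1,1,1,1\}$ the WMMS of agent~$3$ is $1+3\delta$ while $V(\maxmin{\allitems}{1}{3})=1$; for $\allitems=\{2,1,1,1,1\}$ both equal $2$. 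Neither gives the strict inequality.

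The missing idea is that you need an agent whose entitlement admits a pair $(l,d)$ with $d<n$, so that the OMMS partition is genuinely coarser than the WMMS partition. The paper achieves this in the simplest possible way: take $\allitems=\{40,60\}$ (only two items) and $\mathbf{t}=(0.6,0.2,0.2)$. Any partition of two items into three parts leaves one part empty, so $\min_j V(Y_j)/t_j=0$ and hence $\wmms{\allitems}{\mathbf{t}}{i}=0$ for every $i$. But agent~$1$ has $t_1=0.6\ge 1/2$, so $(l,d)=(1,2)$ is admissible, and $V(\maxmin{\allitems}{1}{2})=40>0$. Your ``tiny entitlement'' mechanism can also be made to work (e.g.\ $\mathbf{t}=(0.01,0.49,0.5)$ with four unit items), but only once you give the target agent an entitlement at least $1/2$; the near-equal vector you proposed is precisely the regime where the phenomenon disappears.
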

\begin{proof}
Let $n=3$ and $\mathbf{t}=(0.6,0.2,0.2)$ and $\allitems = \{40,60\}$.
Then, in any partition of $\allitems$ into three parts, at least one part is empty, so $\wmms{\allitems}{\mathbf{t}}{i}
=0$ for all $i\in[3]$, and thus every allocation is WMMS-fair. 

However, with $i=1,l=1,d=2$, we have $\maxmin{\allitems}{1}{2}=\{40\}$, so:
\begin{align*}
V\left(\maxmin{\allitems}{1}{2}\right) = 40 > 0 = \wmms{\allitems}{\mathbf{t}}{1}
\end{align*}
Indeed, not every partition is OMMS-fair --- OMMS-fairness requires that $X_1$ contains at least one item.
\end{proof}

The problem with WMMS-fairness, as illustrated in the previous proposition, is that the WMMS of $i$ depends on the entire vector $\mathbf{t}$, rather than just on $t_i$. To fix this, I suggest the \emph{bipartite weighted maximin-share (BMMS)}, defined as:
\begin{align*}
\bmms{\allitems}{t_i}
~~
:= 
t_i\cdot
\left(
~~
\max_{(X,Y)\in \partition{\allitems}{2}}
~~
\min \left(\frac{V(X)}{t_i}, \frac{V(Y)}{1-t_i}\right)
~~
\right)
\end{align*}
In BMMS, in contrast to WMMS, $\allitems$ is always partitioned into two parts: a part for $i$ and a part for ``all the others''. The ratio between the values of these parts should be as near as possible to $t_i/(1-t_i)$.

When $n=2$, BMMS-fairness is equivalent to WMMS-fairness. However, they differ when $n\geq 3$. 

I believe that BMMS-fairness implies both  WMMS-fairness and OMMS-fairness, but have not yet had time to verify this.

Note that a WMMS-allocation, by definition, always exists. However, it is open whether an OMMS-allocation always exists, and whether a BMMS-allocation always exists. It is apparently open even for $n=2$.

\section{Related Work}
\label{sec:related}
\label{sec:future}
This note focused on a set with a single subset-ordering $\succeq$. 
It corresponds to a fair allocation problem in which all participants have \emph{equal preferences}, but may have \emph{different entitlements}.

The maximin share was introduced in the context of fair allocation among agents with \emph{equal entitlements} but \emph{different preferences}.
In this problem, there are $n$ participants with $n$ different subset-orderings, $\succeq_1,\ldots,\succeq_n$,
and each participant should receive a subset of $\allitems$ that satisfies the fairness condition with respect to  $\succeq_i$.
In this context,
\citet{Moulin1990Uniform}
 introduced the fairness criterion called \emph{positive preference externalities}, which means that an agent should weakly prefer his share to the share he would get if all agents had the same preferences.
This naturally lead to the definition of the $1$-out-of-$n$ MMS
\citep{budish2011combinatorial}. 
The generalization to $l$-out-of-$d$ MMS was done by
\citet{babaioff2019competitive,babaioff2019fair}.

With \emph{equal entitlements} and \emph{equal preferences}, WMMS-fairness and OMMS-fairness and BMMS-fairness are all equivalent, and they are all satisfied by the allocation defining the 1-out-of-$n$ maximin share.

With \emph{equal entitlements} but \emph{different preferences}, a 1-out-of-$n$ MMS allocation always exists when $n=2$ but may not exist when $n\geq 3$ \citep{procaccia2014fair}.
This discovery has lead to the definition of \emph{$r$-fraction MMS allocation}. Suppose each preference-ordering $\succeq_i$ is represented by a cardinal value-function $V_i$; then,  in an {$r$-fraction MMS allocation}, for all $i$, $V_i(X_i) \geq r\cdot V_i(\maxmin{X}{1}{n})$.
There are algorithms for finding an $r$-fraction MMS allocation for various fractions $r\in(0,1)$ and various assumptions on the preference orderings  \citep{amanatidis2016truthful,amanatidis2017approximation,barman2017approximation,ghodsi2018fair,kurokawa2018fair,barman2018groupwise,garg2018approximating,huang2019algorithmic}.

The case of \emph{different preferences} and \emph{different entitlements} was studied by \citet{farhadi2019fair,aziz2019maxmin}

\section{Acknowledgments}
Lemma \ref{lem:mms-char} was derived in order to answer a question by an anonymous referee of the AAMAS journal, to whom I am very grateful.

\bibliographystyle{apalike}
\bibliography{../erelsegal-halevi}

\end{document}